\documentclass[12pt, reqno]{amsart}

\headheight=6.15pt \textheight=8.5in \textwidth=6.25in
\oddsidemargin=0in \evensidemargin=0in \topmargin=0in
\usepackage{epsfig}

\newcommand{\kahler}{K\"ahler\ }

\newcommand{\PP}{{\mathbb P}}
\newcommand{\R}{{\mathbb R}}
\newcommand{\C}{{\mathbb C}}

\newcommand{\Z}{{\mathbb Z}}
\newcommand{\N}{{\mathbb N}}

\renewcommand{\d}{\partial}
\newcommand{\dbar}{\bar\partial}
\newcommand{\ddbar}{\partial\dbar}

\newcommand{\go}{\mathfrak}

\newcommand{\al}{\alpha}

\newcommand{\ep}{\varepsilon}

\newtheorem{theo}{{\sc Theorem}}[section]
\newtheorem{cor}[theo]{{\sc Corollary}}
\numberwithin{equation}{section}

\theoremstyle{plain}

\newtheorem{lem}[theo]{Lemma}

\newenvironment{rem}{\medskip\noindent{\it Remark:\/} }{\medskip}
\newenvironment{defin}{\medskip\noindent{\it Definition:\/} }{\medskip}

\allowdisplaybreaks

\title[Asymptotic slopes of the Aubin-Yau functional]
{Asymptotic slopes of the Aubin-Yau functional and calculation of the Donaldson-Futaki invariant}

\author{Daniel Rubin}
\thanks{The author was partially supported by NSF grant DMS-12-66033.}
\address{Department of Mathematics, Cornell University, Ithaca, NY 14850}
\email{drubin@math.cornell.edu}

\begin{document}

\maketitle

\begin{abstract}
We derive an explicit formula for the asymptotic slope of the Aubin-Yau functional along a Bergman geodesic on a surface of complex dimension 2, extending the work of Phong-Sturm \cite{PSMab} on Riemann surfaces. This is equivalent to an explicit calculation of the Donaldson-Futaki invariant of a test configuration. The slope is given as a rational linear combination of period integrals of rational functions that sum to a rational number. The result gives a way to check directly whether a two dimensional projective variety is $K$-stable.
\end{abstract}

\section{Introduction}
		
This paper begins an investigation into the relationship between certain algebraic notions of stability and the (possibly singular) solutions of equations of canonical metrics on \kahler manifolds via asymptotic analysis of certain convex energy functionals.

\smallskip 	
		
The existence of canonical metrics in \kahler geometry is now well understood to be linked to certain notions of stability in the sense of geometric invariant theory. Historically, the first result of this type was the theorem of Donaldson-Uhlenbeck-Yau on the equivalence of the existence of Hermitian-Einstein metrics on holomorphic vector bundles with Mumford-Takemoto stability \cite{D1,UY}. Following Yau's seminal proof of the Calabi conjecture \cite{Y1}, the recent resolution of the Yau-Tian-Donaldson conjecture establishes the equivalence of $K$-polystability and the existence of K\"{a}hler-Einstein (KE) metrics on Fano manifolds \cite{Y,CDS1,CDS2,CDS3,DS,T2,Berm}. Several questions remain unresolved, however, including: 
\begin{itemize}
\item[(Q1.)] Given $(X,L)$ a very ample line bundle over a \kahler manifold, can we determine if $(X,L)$ is $K$-stable?
\item[(Q2.)] What are the possible singularities of generalized plurisubharmonic \kahler potential solutions to the KE equation in the Fano case?
\end{itemize}
We take a variational approach to attempt to address these questions. The existence of KE metrics and the more general  existence of constant scalar curvature \kahler (cscK) metrics are determined by the properness of certain energy functionals (see \cite{D2}, for example). It is therefore desirable to understand the link between algebraic notions of stability and the behavior of these functionals. 

\smallskip

In this article we consider certain special one-parameter degenerations of the \kahler class along which the energy functionals restrict to become convex functions. Along these directions, the relevant energy functionals have asymptotic slopes that are related to algebraic stability invariants, and determine the properness of the energy. For instance, the existence of a degeneration along which the asymptotic slope is negative is an obstruction to the existence of a minimizer for the functional. Asymptotics of energy functionals are also of considerable interest in the study of partition functions over Bergman metrics \cite{KZ}. The aim of this article is to establish the asymptotics of the Aubin-Yau functional, and express its asymptotic slopes as an explicit formula in terms of the data of test configurations by means of analysis of singular integrals. 
As a result, we obtain a way of checking the Chow-Mumford or $K$-stability of a variety.

\smallskip

Let $(X,\omega_0)$ be a \kahler manifold of complex dimension $n$ with reference \kahler metric $\omega_0$, $\omega_\phi = \omega_0 + \frac{\sqrt{-1}}{2\pi}\ddbar \phi$, $Ric(\omega_0)=-\frac{\sqrt{-1}}{2\pi}\log \omega_0^n$ the Ricci form of $\omega_0$, and $V = \int_X \omega_0^n$. The functionals described below are defined on the space of \kahler potentials
\begin{equation}
	\mathcal{K} = \{\phi \in C^\infty(X), \omega_0 + \frac{\sqrt{-1}}{2\pi}\ddbar \phi > 0\}.
\end{equation}



\smallskip

\begin{defin}
 The \textit{Aubin-Yau functional} $F_{\omega_0}^0(\phi)$ is given by
 \begin{equation}
 	F_{\omega_0}^0(\phi) = \frac{1}{n+1}\frac{1}{V}\int_X \phi \sum_{i=0}^n \omega_0^i \wedge \omega_{\phi}^{n-i}.
 \end{equation}
\end{defin}

The significance of the Aubin-Yau functional in \kahler geometry is discussed extensively in \cite{PSLSC}. Minimizers of $F_{\omega_0}^0(\phi)$ in the space of Bergman metrics are called \emph{balanced metrics}. Zhang \cite{Z} proved that the existence of a balanced metric is equivalent to Chow-Mumford stability (see also \cite{PSStab}). Donaldson showed that the existence of a cscK metric implies the existence of a balanced metric, hence the Chow-Mumford stability and the existence of a minimizer for $F_{\omega_0}^0(\phi)$ in the space $\mathcal {K}_k$. In this case, the asymptotic slope of the Aubin-Yau functional is necessarily positive. 

\smallskip

The Aubin-Yau functional also relates to $K$-stability. It is also shown in \cite{PSLSC} that the Donaldson-Futaki invariant of a test configuration is equal to a limit of the asymptotic slopes of Aubin-Yau along Bergman geodesics; see Theorem \ref{DFinvariant} below.

\smallskip

Briefly, let us say how the Aubin-Yau functional is related to other functionals in the literature. It is related to the $J$-functional
\begin{equation}
	J_{\omega_0}(\phi) = \frac{\sqrt{-1}}{2\pi V} \int_X \sum_{i=0}^{n-1} \frac{(i+1)}{(n+1)} \d\phi\wedge \dbar\phi \wedge \omega_\phi^{n-i-1}\wedge\omega_0^i
\end{equation}
by
\begin{equation}
	F_{\omega_0}^0(\phi) = \frac{1}{V}\int_X \phi \omega_0^n - J_{\omega_0}(\phi).
\end{equation}
In the special case $[\omega_0] = [K_X^{-1}]$, $F_{\omega_0}^0$ is related to the functional $F_{\omega_0}(\phi)$ by
\begin{equation}
	F_{\omega_0}(\phi) = -F_{\omega_0}^0(\phi) - \log\left(\frac{1}{V}\int_X e^{h_{\omega_0}-\phi}\omega_0^n\right), \qquad Ric(\omega_0)-\omega_0 = \frac{\sqrt{-1}}{2\pi}\ddbar h_{\omega_0}.
\end{equation}
Minimizers of $F_{\omega_0}(\phi)$ are K\"{a}hler-Einstein metrics. Its asymptotics are discussed to establish the necessity of $K$-stability for existence of a KE metric and the issue of uniqueness of KE metrics in \cite{Bern,Berm}. Note the surprising sign on the $F_{\omega_0}^0$ term, given that both $F_{\omega_0}^0$ and $F_{\omega_0}$ are convex along the Bergman geodesics we will define below.

\medskip

Here is the setup for the degenerations we will consider: Let $L \rightarrow X$ be a very ample line bundle, with $S=\{S_0,...,S_{N}\}$ a basis of sections of $H^0(X,L)$ furnishing a Kodaira embedding
\begin{equation}
	X \ni z \mapsto \iota_S (z) = [S_0(z),...,S_N(z)] \in \PP^N.
\end{equation} 
Then the line bundle $L$ is the pullback of the restriction to $\iota(X)$ of the hyperplane bundle $\mathcal{O}_{\PP^N}(1)$. We consider the action of one-parameter subgroups $\sigma_t \in SL(N+1,\C)$ acting diagonally as
\begin{equation}
	\sigma_t \cdot S = (t^{a_0}S_0,...,t^{a_N}S_N), \qquad a_0+...+a_N=0.
\end{equation}
Under this action, $X$ acquires a corresponding family of \kahler metrics
\begin{equation}
	\omega_t = \frac{\sqrt{-1}}{2\pi} \ddbar \log \|\sigma_t \cdot S \|^2, 
		\qquad \|\sigma_t \cdot S\|^2 = \sum_{j=0}^N |t|^{2a_j}|S_j|^2
\end{equation}
which are the restrictions to $\sigma_t \cdot \iota(X)$ of the Fubini-Study metric on $\PP^N$. Written in terms of potentials, we have $\omega_t = \omega_0 + \frac{\sqrt{-1}}{2\pi}\ddbar \phi$, where our reference metric is $\omega_0 = \frac{\sqrt{-1}}{2\pi}\ddbar \|S\|^2$, and 
\begin{equation}
	\phi = \log \frac{\|\sigma_t \cdot S\|^2}{\|S\|^2} = \log \frac{\sum_{j=0}^N
		|t|^{2a_j}|S_j|^2}{\sum_{j=0}^N |S_j|^2}.
\end{equation}
 The finite dimensional space of such potentials as the basis of sections varies is called the \emph{Bergman space} $\mathcal{K}_1$. We may also consider larger Bergman spaces $\mathcal{K}_k$ as we consider powers of the line bundle $L^k$ with larger bases of sections. Note that if $\phi$ is a potential in $\mathcal{K}_1$, then $k\phi$ is a potential in $\mathcal{K}_k$, and furthermore, 
\begin{equation}\label{scaling}
	F_{k\omega_0}^0(k\phi) = kF_{\omega_0}^0(\phi),
\end{equation}
so for our purposes it suffices to look at a single line bundle $L$.

\smallskip

We may assume that $a_0\geq ...\geq a_N$, and we call the sections with weight equal to $a_N$ \emph{sections of lowest weight}. The path $t\mapsto \phi$ defined above in the space of \kahler potentials is called a \emph{Bergman geodesic}.

\medskip

It is known that along such a one-parameter subgroup, $F_{\omega_0}^0(\phi)$ is convex in $u = \log(1/|t|)$. We aim to describe the asymptotic behavior of $F_{\omega_0}^0$ as $u\rightarrow \infty$, or equivalently, as $|t| \rightarrow 0$, and in particular, to determine its asymptotic slope. The asymptotic slope $\mu$ may thus be defined either as
\begin{equation}
\mu = \lim_{u\rightarrow \infty} \frac{d}{du} F^0_{\omega_0}(\phi),
\end{equation}
or as
\begin{equation}
F^0_{\omega_0}(\phi) = \mu \log(\frac{1}{|t|}) + O(1)
\end{equation}
as $|t| \rightarrow 0$. We employ analysis to establish that the singular behavior of the functional is $O(\log|t|)$, and use some algebra to determine the precise coefficient. 

\smallskip

Let us also recall the relation of $F^0_{\omega_0}$ to $K$-stability. Let $F$ be the Donaldson-Futaki invariant of the test configuration, whose sign determines $K$-stability (see the survey \cite{PSLSC} for the definitions and the equivalence of test configurations and Bergman geodesics/one-parameter subgroups). Let $\mu_k$ be the asymptotic slope of $\frac{1}{k}F_{k\omega_0}^0(k\phi)$ in $\mathcal{K}_k$, and let $F$ be the Donaldson-Futaki invariant of the test configuration corresponding to $\phi$ (see the survey \cite{PSLSC} for the definitions and the equivalence of test configurations and Bergman geodesics/one-parameter subgroups). The sign of $F$ determines the $K$-stability of the test configuration. Letting $V_k$ be the volume of the $k$-th scaling, we have
\begin{theo}[\cite{PSLSC}, Lemma 6]\label{DFinvariant}
\begin{equation}
F = \lim_{k\rightarrow \infty} \frac{\mu_kV_k}{k^n}.
\end{equation}
\end{theo}
From the scaling relation (\ref{scaling}), it is clear that the single asymptotic slope $\mu$ computes the Donaldson-Futaki invariant.

\medskip

The following theorem on the asymptotic slope in complex dimension $n=1$ is in \cite{P}:
\begin{theo}\label{n=1}
Let $X$ be a Riemann surface, with $L$ and $\phi$ as above. Then
\begin{equation}\label{n=1slope}
	F_{\omega_0}^0(\phi) = \{-2a_N - \frac{1}{V}\sum_{\text{zeroes of $S_N$}}\sum_{\al=1}^M p_\al^2(m_\al -m_{\al+1})\}\log\frac{1}{|t|} + O(1)
\end{equation}
as $t\rightarrow 0$, where $p_\al,m_\al$ refer to the data of the Newton polygon.
\end{theo}
In this case, the slopes of the Newton polygon $m_\alpha$ are such that $m_\alpha > m_{\alpha+1}$, so the expression in braces above is positive, and the formula gives another proof of the Chow-Mumford stability of curves. Another proof of Theorem \ref{n=1} is given in \cite{PSMab}, along with the slope of the Mabuchi functional. Our approach is inspired by \cite{PSMab}, as well as earlier works on asymptotics of oscillatory integrals in \cite{PhStein,PSAlg}.  

\smallskip

We derive an analogous formula for the Aubin-Yau functional in complex dimension $n=2$ using a similar asymptotic calculation of singular integrals with certain modifications in order to deal with the new complications in higher dimension. We expect that the approach is valid in all dimensions with analogous formulas for the slope, but in this paper we stick to dimension 2 for concreteness and ease of notation. 

\medskip

Here are the results of the paper: First, we observe that for all the integrals that do not involve the highest power of $\omega_\phi$, the entire contribution to the slope is from the lowest weight:
\begin{theo}\label{lowestWeight}
Assume $X$ has dimension $n=2$, with $L$ and $\phi$ as above. For $1\leq i \leq n$,	
	\begin{equation}	
	\frac{1}{V}\int_X \phi \omega_0^i \wedge\omega_\phi^{n-i} = -2a_N\log\frac{1}{|t|} +O(1)
	\end{equation}
as $t\rightarrow 0$.
\end{theo}

\medskip

The main result of this paper is the following formula for the slope:
\begin{theo}\label{slope}
Assume $X$ has dimension $n=2$, with $L$ and $\phi$ as above. Then we have
\begin{equation}
	F_{\omega_0}^0(\phi_t) = \mu \log\frac{1}{|t|} + O(1)
\end{equation}
as $|t|\rightarrow 0$ where the asymptotic slope $\mu$ is given by
\begin{align}\label{formula}
	\mu = (-2a_N)-\frac{1}{3V}&\sum_{Sing(\tilde{D})\cap Z(\tilde{S_N})}
		\sum_{\substack{\text{faces $F_c$}\\ \text{of $\go{N}$}}}16d_c \nonumber\\
		&\sum_{\{i,j,k,l\}^*}D_4(i,j,k,l)\int_0^\infty \int_0^\infty \frac{ x^{2(p_i+p_j+p_k+p_l)-1}y^{2(r_i+r_j+r_k+r_l)-1}}{(\sum_\al x^{2p_\al}y^{2r_\al})^4}dxdy
\end{align}
where $\go{N}$ is the Newton polytope of the data at a singular point with normal crossings, the exponents $p_i,r_i$, etc. refer to the data of the Newton diagram, and the sum indicated by $\{i,j,k,l\}^*$ is over sets of four indices $\{i,j,k,l\}$ corresponding to an unordered selection of four points of the Newton diagram, all lying on the face $F_c$ (not necessarily vertices of $F_c$), at least three of which are distinct, and not all collinear. The sum over $\al$ is over indices corresponding to all points of the Newton diagram lying on the face $F_c$. The term $d_c$ is defined by describing the equation of the face $F_c$ in $(p,r,q)$-space as
\begin{equation}
	F_c = \{a_c p + b_c r + q = d_c\}\cap \go{N},
\end{equation}
and is positive. The positive, symmetric, integer-valued function denoted $D_4(i,j,k,l)$ is a sum of Gram determinants depending on the vectors $(p_i,r_i),(p_j,r_j),(p_k,r_k),(p_l,r_l)$, and is defined in (\ref{D4def}) below. The integrals in the formula are all convergent.
\end{theo}

\begin{rem}
We note that as written, the asymptotic slope $\mu$ is a difference of positive terms: a positive trivial contribution $-2a_N$ from the lowest weight, minus the positive nontrivial contribution. The positive nontrivial contribution has a complicated dependence on the weights since it jumps as the shape of the Newton polytope changes. Note however that the slope is linear and homogeneous in the weights $a_i$ or $q_i$, at least for fixed geometries of the Newton polytope.
\end{rem}

\medskip

The outline of the paper is as follows: in section two, we describe the proof of Theorem \ref{slope}. We begin by isolating the contribution from the lowest weight. We then calculate the lowest order terms that appear in the volume forms, making use of some algebraic identities that give us the Gram determinant quantities $D_4(i,j,k,l)$. At this point, we introduce and describe the important features of the Newton diagram associated to a one-parameter subgroup, and carry out the computation of the singular part of the integrals. In the following section, we carry through the slope calculation for some simple examples. To conclude we outline some directions for further work.

\medskip
\textbf{Acknowledgements:} I would like to thank my Ph.D. advisor D.H. Phong for his guidance and helpful insight into the problem, and also Steve Zelditch for his encouragement. Also thanks to Karsten Gimre for help in checking calculations during the preparation of this work and for his extensive assistance with the use of Mathematica.

\section{Details of the Slope Calculation}

It is convenient to utilize the notation from \cite{PSMab} and isolate the lowest power of $|t|$ as follows:
\begin{equation}
\phi = \log \frac{|\sigma S|^2}{|S|^2} - 2a_N\log\frac{1}{|t|}, \quad 
	|\sigma S|^2 = \sum_{j=0}^N |t|^{2q_j}|S_j|^2, \quad |S|^2 = \sum_{j=0}^N |S_j|^2, 
\end{equation}
where the exponents
\begin{equation}
	q_j = a_j - a_N \geq 0
\end{equation}
are the \textit{non-negative weights}. Note that at least one of the non-negative weights is equal to 0. By the assumption that the basis of sections furnishes a smooth Kodaira embedding, there is no point on $X$ where all of the sections vanish. This implies that $\log |S|^2$ is bounded on $X$, and therefore
\begin{equation}
	\int_X \log |S|^2 \omega_0^i \omega_\phi^{n-i} \leq CV = O(1),
\end{equation}
so we may drop this term from $\phi$ for the calculation of the asymptotic slope. It is then trivial to compute the contribution to the slope from the section of lowest weight, since
\begin{align}
	\frac{1}{V}\int_X -2a_N\log\frac{1}{|t|}\omega_0^i \omega_\phi^{n-i} &= -2a_N\log\frac{1}{|t|}\frac{1}{V}\int_X\omega_0^i \omega_\phi^{n-i}\nonumber \\
	&= -2a_N\log\frac{1}{|t|}
\end{align}
for each $0\leq i \leq n$. These $n+1$ terms account for the overall contribution of $-2a_N$ (which is non-negative since $a_N\leq 0$) to the asymptotic slope. 

We set out to determine the nontrivial contribution to the slope, that is, to compute the singular part of 
\begin{equation}
	A_i(t) = \int_X \log|\sigma S|^2 \omega_0^i \omega_\phi^{n-i}.
\end{equation}
Here is the basic idea: The singular part of the global integral $A_i(t)$ may be calculated by integrating only over neighborhoods of isolated points, namely the transverse intersection points of the zero divisor of the section(s) of lowest weight with itself and the zero divisors of the other sections.

\begin{proof}
Observe first that the integrand of $A_i(t)$ is bounded away from the union of the zero sets of the sections of lowest weight. Let $s_N$ be a section of lowest weight. Suppose that in a neighborhood of a smooth point $p = 0$ of $\{S_N=0\}$, we may take complex coordinates $z_1,z_2$ (possibly after a resolution) in which $\{S_N=0\}=\{z_1=0\}$, and each of the other sections in this trivialization may be written in the form $s_i = z_1^{p_i}u_i(z_1,z_2)$, where $u_i$ is a unit. Then it will follow from the calculations below that the volume forms $\omega_0^2$, $\omega_0\wedge\omega_\phi$, and $\omega_\phi^2$ only contain terms of strictly higher order in $|z_1|$ and $|z_2|$, and thus there is no contribution to the $\log|t|$ term in $F_{\omega_0}^0$ by Lemma \ref{slopeCalc}. 
\end{proof}

In general, we recall Hironaka's result on resolution of singularities: There exists a resolution $\mu:\tilde{X} \rightarrow X$ such that $\mu^*D+Exc(\mu)=\tilde{D}$ has simple normal crossing support. On $\tilde{X}$, the nontrivial contributions to the slope come from a finite set of points of intersections with the other divisors with $\mu^*D_N$.

Assume that we have a set of coordinates in a neighborhood of a point, taken to be the origin, at which the sections vanish with normal crossings. This means that we our sections $S_j$ are written in these coordinates as
\begin{equation}
S_j = x^{p_j}y^{r_j}u_j(x,y),
\end{equation}
where the $u_j(x,y)$ are holomorphic functions that do not vanish at the origin. 
For a more detailed account of Hironaka's theorem and its use in the analysis of integrals see \cite{PSAlg}.

\subsection{Algebraic computation of the volume form to lowest order}

We must first compute $\omega_\phi$, $\omega_0\wedge\omega_\phi$, and $\omega_\phi^2$. We find
\begin{align}
\omega_\phi &= \frac{\sqrt{-1}}{2\pi}\d \dbar \log |\sigma S|^2 \\
	&= \frac{\sqrt{-1}}{2\pi|\sigma S|^4} \sum_{i,j} |t|^{2q_i+2q_j}\left( |S_j|^2\d S_i\wedge \dbar \bar{S_i} - S_i \bar{S}_j \d S_j\wedge \dbar \bar{S}_i\right)\\
	&= \frac{\sqrt{-1}}{2\pi|\sigma S|^4}\sum_{i,j} |t|^{2(q_i+q_j)}|u_i(0)|^2|u_j(0)|^2|x|^{2(p_i+p_j-1)}|y|^{2(r_i+r_j-1)}\nonumber\\
	& \left( (p_i^2-p_i p_j)|y|^2dx\wedge d\bar{x} + (p_ir_i-p_jr_i)\bar{x}ydx\wedge d\bar{y} + (p_ir_i - p_ir_j)x\bar{y}dy\wedge d\bar{x} + (r_i^2-r_jr_i)|x|^2dy\wedge d\bar{y}\right)\nonumber\\
	& + O(...).
\end{align}
Here by $O(...)$ we mean all higher order terms in $|x|$ and $|y|$. Taking the wedge product yields
\begin{align}
\omega_{\phi}^2 &= \frac{1}{4\pi^2|\sigma S|^8}\sum_{i,j,k,l} |t|^{2(q_i+q_j+q_k+q_l)}|x|^{2(p_i+p_j+p_k+p_l-1)}|y|^{2(r_i+r_j+r_k+r_l-1)}|u_i|^2|u_j|^2|u_k|^2|u_l|^2 \nonumber\\
	& 2\left[ (p_i^2 - p_ip_j)(r_k^2 - r_kr_l) - (p_ir_i-p_jr_i)(p_kr_k-p_kr_l)\right]\sqrt{-1}dx\wedge d\bar{x}\wedge \sqrt{-1}dy\wedge d\bar{y}\nonumber\\
	& + O(...),
\end{align}
where we have used the symmetry in the indices $(i,j)\leftrightarrow(k,l)$ to obtain twice the quantity in brackets. The quantity in the brackets we will denote by the symbol $(ijkl)$, and it may be simplified as

\begin{align}
(ijkl) &= (p_i^2 - p_ip_j)(r_k^2 - r_kr_l) - (p_ir_i-p_jr_i)(p_kr_k-p_kr_l)\nonumber\\
	&= (p_i r_k - p_k r_i)(p_i-p_j)(r_k-r_l).
\end{align}
The other volume forms are the same except for the factors of $|t|$ that only appear in $\omega_\phi$:
\begin{align}
\omega_0\wedge\omega_{\phi} =& \frac{1}{4\pi^2|S|^4|\sigma S|^4}\sum_{i,j,k,l} |t|^{2(q_i+q_j)}|x|^{2(p_i+p_j+p_k+p_l-1)}|y|^{2(r_i+r_j+r_k+r_l-1)}|u_i|^2|u_j|^2|u_k|^2|u_l|^2 \nonumber\\
	& \left[(ijkl)+(klij)\right]\sqrt{-1}dx\wedge d\bar{x}\wedge \sqrt{-1}dy\wedge d\bar{y}\nonumber\\
	& + O(...).
\end{align}

We will see later that it is sufficient to consider only these lowest order terms. From now on, we will also assume that $u_j(0)=1$ for $j=0,...,N$. 

\smallskip
 
The appearance of the determinant-like quantity $(ijkl)$ is a novel feature in dimension $n>1$. Since we get the same monomial in the numerator when a single set of indices $\{i,j,k,l\}$ are picked from among the four summations, the overall coefficient on each term is a sum of the quantities $(ijkl)$ over all permutations of their order. This creates a certain amount of cancellation. In particular, it rules out terms where the same index is taken in each of the four sums. Let us make some simple observations about the symbol $(ijkl)$. First, $(ijkl)=0$ if $i=j$, $k=l$, or $i=k$. If $i$ and $j$ are distinct indices, the only possibly non-zero symbols involving only $i$ and $j$ are $(ijji)$ and $(jiij)$. But
\begin{align}
	(ijji) &= (p_i r_j - p_j r_i)(p_i - p_j)(r_j - r_i)\nonumber\\
		&= -(p_j r_i - p_i r_j)(p_j - p_i)(r_i - r_j)\nonumber\\
		&= -(jiij),
\end{align}
so $(ijji)+(jiij)=0$, and therefore there are no terms in the lowest order part of the volume form with only two distinct indices taken from the sum.  

\smallskip

Now consider the case of three distinct indices:
\begin{lem}\label{3VertexFace}
The nonzero symbols $(ijkl)$ consisting of a set of three indices with one repeated have sum
\begin{align}
	(ijki)+(ikji)+(jiik)+(kiij)+(jiki)+(kiji) &= (p_j r_i - p_k r_i - p_i r_j + p_k r_j + p_i r_k - p_j r_k)^2\\
	&= ([ij]-[ik]+[jk])^2,
\end{align}
where $[ij] = p_i r_j-p_j r_i$.
\end{lem}

\begin{rem}
The quantity $([ij]-[ik]+[jk])^2$ represents the square of the area of any parallelogram with three vertices $\{(p_i,r_i),(p_j,r_j),(p_k,r_k)\}$, and is nonnegative and symmetric in the indices $i,j,k$. This quantity is also known as the \emph{Gram determinant} of the difference vectors between any two of the vectors $\{(p_i,r_i),(p_j,r_j),(p_k,r_k)\}$ and the third one.
\end{rem}

And the remaining case of four distinct indices:
\begin{lem}
The summation of symbols $(ijkl)$ over all permutations of 4 distinct indices yields
\begin{equation}
	\sum_{\sigma \in S_4}(\sigma(i)\sigma(j)\sigma(k)\sigma(l)) = ([ij]-[ik]+[jk])^2+([jk]-[jl]+[kl])^2+([kl]-[ki]+[li])^2+([li]-[lj]+[ij])^2
\end{equation}
\end{lem}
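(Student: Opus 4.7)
The plan is to group the $24$ permutations in $S_4$ by the unordered partition of the index set $\{i,j,k,l\}$ into two pairs; there are exactly three such partitions, and each yields $8$ permutations (those placing one chosen pair at positions $\{1,2\}$ of the symbol in either order and the complementary pair at positions $\{3,4\}$).

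For a fixed partition $\{\{a,b\},\{c,d\}\}$, using the factorization $(xyzw) = [xz](p_x - p_y)(r_z - r_w)$, a short calculation shows that the four permutations $(abcd),(abdc),(bacd),(badc)$ sum to $X(X - Y)$, while the remaining four (with $\{c,d\}$ at positions $\{1,2\}$) sum to $Y(Y - X)$, where $X = (p_a - p_b)(r_c - r_d)$ and $Y = (p_c - p_d)(r_a - r_b)$. Hence each partition contributes $(X - Y)^2$ to the total, and direct expansion yields the clean identity $X - Y = [ac] + [bd] - [ad] - [bc]$.

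Setting $T_{\alpha\beta\gamma} := [\alpha\beta] - [\alpha\gamma] + [\beta\gamma]$, so that $T_{\alpha\beta\gamma}^2$ is the quantity appearing on the right-hand side of Lemma \ref{3VertexFace}, direct substitution shows that the three partitions $\{\{i,j\},\{k,l\}\}$, $\{\{i,k\},\{j,l\}\}$, $\{\{i,l\},\{j,k\}\}$ yield the squares $(T_{kli} - T_{jkl})^2$, $(T_{ijk} + T_{kli})^2$, and $(T_{ijk} - T_{jkl})^2$ respectively. Thus the left-hand side of the lemma equals
\begin{equation*}
(T_{kli} - T_{jkl})^2 + (T_{ijk} + T_{kli})^2 + (T_{ijk} - T_{jkl})^2.
\end{equation*}

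To conclude, apply the elementary algebraic identity $(c - b)^2 + (a + c)^2 + (a - b)^2 = a^2 + b^2 + c^2 + (a - b + c)^2$ together with the linear relation $T_{lij} = T_{ijk} - T_{jkl} + T_{kli}$, itself a routine expansion in the $[\alpha\beta]$ brackets (equivalently, the two-dimensional area decomposition among the four signed triangles determined by four coplanar points). The main obstacle is the careful sign bookkeeping in the grouping step; once the formula for $X - Y$ and the linear relation among the $T$'s are in hand, the rest of the argument is formal.
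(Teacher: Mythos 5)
Your proof is correct, and it takes a genuinely different route from the paper, which gives no derivation at all: there this identity (together with Lemma \ref{3VertexFace}) is simply asserted to be verifiable by a computer algebra system. I checked your steps: each of the three pair partitions $\{\{a,b\},\{c,d\}\}$ of the slots does contribute $X(X-Y)+Y(Y-X)=(X-Y)^2$ with $X-Y=[ac]+[bd]-[ad]-[bc]$; the three resulting squares are $(T_{kli}-T_{jkl})^2$, $(T_{ijk}+T_{kli})^2$, $(T_{ijk}-T_{jkl})^2$ as you claim; and the linear relation $T_{lij}=T_{ijk}-T_{jkl}+T_{kli}$ holds because both sides reduce to $[ij]+[jl]+[li]$, so your elementary identity $(c-b)^2+(a+c)^2+(a-b)^2=a^2+b^2+c^2+(a-b+c)^2$ finishes the computation. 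What your argument buys is structure: the pair-partition grouping explains why the $24$ terms organize into exactly three perfect squares, and the last step exhibits the total as a sum of the three-index quantities $D_3$ (squares of twice the signed triangle areas), which makes the nonnegativity and the vanishing in the collinear case --- precisely the corollary the paper draws right after the lemma --- transparent rather than accidental, and indicates how the identity should extend beyond dimension two. What the paper's computer-algebra check buys is only brevity; your derivation could replace it at essentially no cost in length.
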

These two algebraic identities, obtained by brute trial and error, may be verified quickly by a computer.

\smallskip

As a corollary, it is clear from these formulas that the lowest order terms in the volume form are non-negative, and are equal to 0 if the four indices correspond to collinear points in the $(p,r)$-plane. 
We set
\begin{equation}
	D_3(i,j,k) = ([ij]-[ik]+[jk])^2
\end{equation}
and
\begin{eqnarray}\label{D4def}
	D_4(i,j,k,l) =
	\begin{cases}D_3(i,j,k)+D_3(j,k,l)+D_3(k,l,i)+D_3(l,i,j) & \text{if all indices distinct}\\
	\frac{1}{2}\left(D_3(i,j,k)+D_3(j,k,l)+D_3(k,l,i)+D_3(l,i,j)\right) & \text{if any two indices are the same}.
	\end{cases}
\end{eqnarray}
The factor of $1/2$ in the case of a repeated index compensates for the overcounting by transposing the slots of the repeated index, and so $D_4(i,j,k,k)=D_3(i,j,k)$. 

\smallskip

We may thus rewrite the lowest-order part of $\omega_\phi^2$ as a sum of positive terms as 
\begin{align}
	\omega_\phi^2 = \frac{1}{|\sigma S|^8}\sum_{\{i,j,k,l\}^*}&\bigg[2D_4(i,j,k,l)|t|^{2(q_i+q_j+q_k+q_l)}|x|^{2(p_i+p_j+p_k+p_l)-2} |y|^{2(r_i+r_j+r_k+r_l)-2}\nonumber\\
	&\frac{\sqrt{-1}}{2\pi}dx\wedge d\bar{x} \frac{\sqrt{-1}}{2\pi}dy\wedge d\bar{y}\bigg] + O(...)
\end{align}
and similarly for $\omega_0^2$ and $\omega_0\wedge \omega_\phi$.

\subsection{Newton diagram}

The analysis of the singular integrals appearing in the Aubin-Yau functional is well facilitated by appealing to the geometry of the Newton polytope.

\begin{defin}
We call the set of points $\{(p_i,r_i,q_i)\in \R^3_+\}_{i=0}^N$ the \emph{Newton diagram} of the data. The \emph{Newton polytope} $\go{N}$ is the region given by
\begin{equation}
	\go{N} = ConvexHull\left( \bigcup_{i=0}^N \{(p_i,r_i,q_i)+\R^3_+\}\right),
\end{equation}
that is, the unbounded convex polytope which is the convex hull of the union of the positive orthant $\R^3_+ = \{(p,r,q)\in \R^3|p,r,q\geq0\}$ translated to each point in the Newton diagram.
\end{defin}

The vertices $\go{V}$ of $\go{N}$ are a subset of the data; $\go{V}= \{(p_v,r_v,q_v)\}_{v=1}^L \subset \{(p_i,r_i,q_i)\}_{i=0}^N$. 

\smallskip

Fix $t$ with $0<|t|<1$. We examine the sum $|\sigma S|^2=\sum_{j=0}^N |t|^{2q_i}|x|^{2p_i}|y|^{2r_i}$ that appears in the denominator and as a factor of $\log|\sigma S|^2$ in the integrals under consideration. It is convenient to describe the regions in the $x,y$ variables where each particular term is \emph{dominant}; for fixed $x,y,t$, we say that the term $|t|^{2q_i}|x|^{2p_i}|y|^{2r_i}$ dominates the other terms in the sum if
\begin{equation}
	|t|^{2q_i}|x|^{2p_i}|y|^{2r_i} \geq |t|^{2q_j}|x|^{2p_j}|y|^{2r_j} \text{ for all $j\neq i$}.
\end{equation}
It is simple to describe the regions where the various terms dominate in terms of variables $\al,\beta$ where we set $|x| = |t|^\al$, $|y|=|t|^\beta$, where $\alpha,\beta$ are real and non-negative.  The term $|t|^{2q_i}|x|^{2p_i}|y|^{2r_i}$ dominates when
\begin{equation}
	|t|^{2(q_i + p_i\al+r_i\beta)} \geq |t|^{2(q_j + p_j\al+r_j\beta)}
\end{equation}
for all other points $(p_j,r_j,q_j)$ in the diagram, or
\begin{equation}\label{MvRegion}
	q_j +p_j\al+r_j\beta \geq q_i+p_i\al+r_i\beta, \qquad j \neq i.
\end{equation}
Together with the constraints $\al \geq 0$, $\beta \geq 0$, these inequalities describe the region $M_i$ where $|t|^{2q_i}|x|^{2p_i}|y|^{2r_i}$ dominates as the intersection of a set of half-planes in $(\al,\beta)$-space. $M_i$ describes the set of planes passing through the point lying below all the other points of the diagram, so it has empty interior unless the point $(p_i,r_i,q_i)$ is a vertex of the Newton polytope. To summarize, we have
\begin{lem} The positive quadrant $\al,\beta \geq 0$ is partitioned into polygonal regions $M_v$, $v=1,...,L$, on which the term $|t|^{2q_v}|x|^{2p_v}|y|^{2r_v}$ dominates. For each vertex with $(p_v,r_v) \neq (0,0)$, the region $M_v$ is the convex hull of the points $(m^x_{v,i},m^y_{v,i})$, where $i$ ranges over the number of faces of $\go{N}$ incident to the vertex, with normal vector $(m^x_{v,i},m^y_{v,i},1)$.
\end{lem}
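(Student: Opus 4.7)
The plan is to identify each dominance region $M_v$ with the slice at height $c = 1$ of the inner normal cone of the Newton polytope $\go{N}$ at the vertex $v$, and then apply the standard polytope--normal-fan duality to read off the vertices of $M_v$ from the facets of $\go{N}$ incident to $v$.

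First I would reformulate the dominance condition (\ref{MvRegion}) as a linear-programming statement. With $|x| = |t|^\al$ and $|y| = |t|^\beta$, the inequalities (\ref{MvRegion}) are equivalent to requiring that the linear functional
\[
	\ell_{\al,\beta}(p,r,q) = p\al + r\beta + q
\]
attain its minimum over the Newton diagram $\{(p_j, r_j, q_j)\}_{j=0}^{N}$ at the index $v$. Since $\al, \beta \geq 0$, the coefficients of $\ell_{\al,\beta}$ are nonnegative, and since $\go{N}$ is built by attaching $\R^3_+$ to each data point, the infimum of $\ell_{\al,\beta}$ over $\go{N}$ coincides with the minimum over the data and is attained on a compact face, hence at a vertex. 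It follows that $M_v$ has nonempty interior only when $v$ is a vertex of $\go{N}$. For generic $(\al,\beta)$ in the positive quadrant the minimum is attained at a unique vertex, and on a codimension-one locus it is attained along an edge of $\go{N}$; this locus is precisely the common boundary between adjacent regions, so the $M_v$ cover the quadrant with pairwise disjoint interiors, establishing the partition claim.

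Next I would identify each $M_v$ with a slice of a normal cone. Introducing the inner normal cone
\[
	N_v = \{(a,b,c) \in \R^3_{\geq 0} : a p_v + b r_v + c q_v \leq a p + b r + c q \text{ for all } (p,r,q) \in \go{N}\},
\]
one sees that $M_v$ is exactly the projection to the $(a,b)$-plane of $N_v \cap \{c=1\}$. By standard duality between the face lattice of $\go{N}$ and the normal fan, the extreme rays of the three-dimensional cone $N_v$ are in bijection with the facets (2-faces) of $\go{N}$ incident to $v$, each facet $F_i$ contributing the ray spanned by its inward normal. A facet whose inward normal can be normalized to $(m^x_{v,i}, m^y_{v,i}, 1)$ then contributes the vertex $(m^x_{v,i}, m^y_{v,i})$ to $M_v$ upon slicing.

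The main obstacle, and where the hypothesis $(p_v, r_v) \neq (0,0)$ enters, is verifying that every extreme ray of $N_v$ relevant to the bounded part of $M_v$ arises from a compact facet of $\go{N}$ whose inward normal has strictly positive $q$-component, so that $M_v$ is exactly the convex hull of the finitely many points $(m^x_{v,i}, m^y_{v,i})$. This reduces to a local polyhedral computation ruling out the unbounded ``side-wall'' facets of $\go{N}$ parallel to the $q$-axis that would otherwise produce extreme rays of $N_v$ lying in the plane $\{c=0\}$; under the stated hypothesis on $v$, these do not arise and the convex-hull description is exact.
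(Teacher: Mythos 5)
Your reduction of the dominance condition to the statement that the linear functional $\ell_{\al,\be}(p,r,q)=p\al+r\be+q$ attains its minimum over the Newton diagram at $v$, and the identification of $M_v$ with the $\{c=1\}$ slice of the inner normal cone of $\go{N}$ at $v$, is correct and is essentially a formalization of the paper's own very brief justification, which simply reads $M_v$ off from the half-plane description (\ref{MvRegion}) and notes that it has nonempty interior only when $(p_v,r_v,q_v)$ is a vertex. The partition claim and the duality ``extreme rays of the normal cone at $v$ $\leftrightarrow$ facets of $\go{N}$ through $v$'' are fine as you argue them.

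The gap is in your final paragraph. The hypothesis $(p_v,r_v)\neq(0,0)$ does not rule out $v$ lying on a side-wall facet: it only excludes the single vertex with $p_v=r_v=0$. A vertex with, say, $p_v=0$ but $r_v>0$ lies on the vertical facet $\{p=0\}$ (which the paper notes is always present for these configurations), so the normal cone $N_v$ has an extreme ray $(1,0,0)$ in the plane $\{c=0\}$, and the slice $M_v$ is unbounded: it is the convex hull of the points $(m^x_{v,i},m^y_{v,i})$ \emph{plus} a recession direction, not the bare convex hull. Concretely, in the paper's own $\PP^2$ example with diagram $\{(0,0,1),(0,1,q),(1,0,0)\}$, $0\le q\le 1$, the vertex $(1,0,0)$ satisfies your hypothesis, yet $M_v=\{\al\le 1,\ \al\le q+\be\}$ is unbounded in $\be$; so the ``local polyhedral computation ruling out side-wall facets'' you invoke cannot succeed as stated. (This reflects an imprecision in the lemma itself; what is actually used later, in Lemma \ref{slopeCalc}, is only that the corners of $M_v$ are exactly the points $(m^x,m^y)$ such that $(m^x,m^y,1)$ is normal to a face of $\go{N}$ containing $v$ --- and that part your duality argument does deliver, provided you account for, rather than try to exclude, the extreme rays coming from the vertical walls.)
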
 

Note that the set of points $(m^x_{v,i},m^y_{v,i})$ forms a natural dual of the Newton polyhedron. Every Newton polytope for test configurations of this form also contains vertical faces $x=0$ and $y=0$, which do not play a role in the analysis.

\smallskip

In the interior of the region where $|t|^{2q_v}|x|^{2p_v}|y|^{2r_v}$ dominates, we may Taylor expand to obtain
\begin{align}\label{FirstOrderApprox}
	\log |\sigma S|^2 &= \log(|t|^{2q_v}|x|^{2p_v}|y|^{2r_v})+O(\frac{\sum_{j\neq v}
		|t|^{2q_j}|S_j|^2}{|t|^{2q_v}|x|^{2p_v}|y|^{2r_v}}),\\
	\frac{1}{|\sigma S|^2} &= \frac{1}{|t|^{2q_v}|x|^{2p_v}|y|^{2r_v}} +
	 		O(\frac{\sum_{j\neq v}|t|^{2q_j}|S_j|^2}{(|t|^{2q_v}|x|^{2p_v}|y|^{2r_v})^2}).\label{FirstOrderApprox2}
\end{align}
At the boundary of the region $M_v$, the term $|t|^{2q_v}|x|^{2p_v}|y|^{2r_v}$ shares the same order as other terms.

\subsection{Singular integral analysis}

We aim to calculate the divergent term in the Aubin-Yau functional as $|t| \rightarrow 0$, which is of the form $\log(1/|t|)$. Let $U$ be a small polydisk around the origin, which we may take to have radius $\epsilon$ in each variable. Subtracting off contributions of order $O(1)$, we calculate the contribution from the term $\int_U \phi \omega_\phi^2$ as follows:
\begin{align}
\int_U \phi \omega_\phi^2 &= \int_U \frac{\log|\sigma S|^2}{4\pi^2|\sigma S|^8} 
	\sum_{\{i,j,k,l\}} |t|^{2(q_i+q_j+q_k+q_l)}|x|^{2(p_i+p_j+p_k+p_l-1)}|y|^{2(r_i+r_j+r_k+r_l-1)}\nonumber\\
	&2D_4(i,j,k,l)\sqrt{-1}dx\wedge d\bar{x}\wedge \sqrt{-1}dy\wedge d\bar{y} + O(1)\\
	&= 8\int_0^\epsilon \int_0^\epsilon \sum_{\{i,j,k,l\}} D_4(i,j,k,l)\frac{|t|^{2(q_i+q_j+q_k+q_l)} x^{2(p_i+p_j+p_k+p_l)-1}y^{2(r_i+r_j+r_k+r_l)-1}}{|\sigma S|^8}\log |\sigma S|^2dxdy +O(1),
\end{align}
where we integrate out the angular variables. In the last line, at the risk of some confusion, we have allowed $x$ and $y$ to stand for the real absolute values of the complex numbers in the first line. We must be careful with all factors of 2 and $\pi$ in this calculation, since we must compare these localized integrals to the global contribution from the lowest weight. The factor of 4 comes from the change to polar coordinates in both $x$ and $y$ (as complex variables):
\begin{align}
\frac{\sqrt{-1}}{2\pi}dx\wedge d\bar{x} &= \frac{\sqrt{-1}}{2\pi}
	(du+\sqrt{-1}dv)\wedge(du-\sqrt{-1}dv)\\
	&= \frac{2}{2\pi}du\wedge dv\\
	&= \frac{2}{2\pi}rdr\wedge d\theta
\end{align}
Integrating each of the $\theta$ variables cancels a factor of $2\pi$ in denominator.

\smallskip

We set $$A(t) = 8\int_0^1 \int_0^1 \sum_{\{i,j,k,l\}} D_4(i,j,k,l)\frac{|t|^{2(q_i+q_j+q_k+q_l)} x^{2(p_i+p_j+p_k+p_l)-1}y^{2(r_i+r_j+r_k+r_l)-1}}{|\sigma S|^8}\log |\sigma S|^2dxdy$$ and set to determine the asymptotic behavior of $A(t)$. 

\smallskip

Our approach to dealing with the singular integrals depends on the following basic convergence result of Ermolaeva and Tsikh \cite{ET}. To state the theorem we need, let us quickly state two definitions. First, given a polynomial in $n$ variables $P(x_1,x_2,\dots,x_n)$, we define $\Delta(P)$ to be the convex hull in $\R^n$ of the points $\alpha_i$ in the non-negative integer lattice, where $P(x) = \sum_{\alpha \in \N^n} c_\alpha x^{\alpha}$, $c_\alpha\neq 0$. This convex region is also referred to as the \emph{Newton polygon} in the literature; it is different from what we have called the Newton polytope above, which treats the weights in $|t|$ differently, but hopefully the context and choice of symbol will make clear to what we are referring. 

\smallskip

Second, we say that a polynomial $Q(x)=\sum c_\alpha x^\alpha$ is \emph{quasi-elliptic} if each \emph{cut-off} $Q_a$ of $Q$, $$Q_a = \sum_{\alpha \in \Delta^a} c_\alpha x^\alpha,$$ where $$\Delta^a = \{k\in \Delta(Q): \langle a,k\rangle = \min_{l\in \Delta(Q)} \langle a,l\rangle\}$$ and $a$ is a covector in $\R^{n*}$, has the property that $Q_a$ does not vanish in $(\R \setminus\{0\})^n$. We will not be too concerned with this precise definition, since every polynomial that we consider here is a sum of terms with positive coefficients and even exponents in each variable, and so is certainly quasi-elliptic. 

\begin{theo}[\cite{ET}, Theorem 1]\label{RationalIntegralConvergence}
If $Q$ is a quasi-elliptic polynomial non-vanishing in $\R^n$, then the integral $$\go{I} = \int_{\R^n} \frac{P(x)}{Q(x)}dx$$ is absolutely convergent if and only if
\begin{equation}\label{convergenceCond}
I + \Delta(P) \subset \Delta^0(Q),
\end{equation}
that is, the translation of $\Delta(P)$ by $I = (1,\dots,1) \in \R^n$ lies in the interior $\Delta^0(Q)$ of $\Delta(Q)$.
\end{theo}

\begin{rem}In fact, the requirement that $Q$ be non-vanishing at the origin is unnecessary, since as long as $I + \Delta(P) \subset \Delta^0(Q)$, the integrand remains locally integrable in a neighborhood of the origin.
\end{rem}

\smallskip

Briefly let us recall the idea of the proof of Theorem \ref{RationalIntegralConvergence}. We embed $\R^n$ in a toroidal compactifaction $M_\Delta$ associated to the convex polyhedron $\Delta(Q)$. This amounts in essence to adjoining to $\R^n$ with local coordinates $(x_1,\dots,x_n)$ open neighborhoods $U_J$ with coordinates $(y_1,\dots,y_n)$ via the monomial transformation 
\begin{equation*}
x_i = y_1^{a^{j_1}_i}\dots y_n^{a^{j_n}_i} \qquad \text{for } i=1,\dots,n,
\end{equation*}
where $a^{j_i}\in \Z^n$, $J = (j_1,\dots,j_n)$ a subset of $\{1,2,\dots,N\}$, where $a^k$ is the smallest interior integer-coordinate normal vector to the $k$-th face of the $N$ faces of $\Delta(Q)$, and the $a^{j_i}$ form a basis of $\Z^n$. The condition (\ref{convergenceCond}) guarantees that in each coordinate neighborhood, under the given monomial change of variables, the integrand is bounded as the coordinates $y_i$ go to 0. It follows from the compactness of $M_\Delta$ that the integral is absolutely convergent.

\smallskip 

A technical point: it is possible that the Newton polygon of the polynomial $Q$ appearing in the denominator does not have the property that the normal vectors $a^{j_i}$ of faces meeting at a single vertex do not form a basis of $\Z^n$, for instance if fewer than $n$ faces meet at that vertex, or if the matrix whose columns are the $a^{j_i}$ is not unimodular. In this case the construction of a toric compactification of $\R^n$ is more complicated, but still possible, and Theorem \ref{RationalIntegralConvergence} still holds (see \cite{ET} and the references therein).

\smallskip

The integrals with which we are concerned also include a $\log$ factor in the integrand, but it also follows from the proof of \ref{RationalIntegralConvergence} and the finiteness of the improper integral $\int_0^1 |\log x| dx$ that we have

\begin{cor}\label{ConvergenceCor}
Let $Q$ be a quasi-elliptic polynomial, non-vanishing except possibly at 0, and $R$ a polynomial consisting of monomial terms with even exponents and positive coefficients. Then $$\go{J} = \int_{\R^n_+} \frac{P}{Q}\log R dx$$ is absolutely convergent if and only if (\ref{convergenceCond}) holds.
\end{cor}

Here is the crucial observation about the lowest order terms appearing in the energy functional that allows us to appeal to the convergence results above.

\begin{lem}\label{PointSelection}
Let $\{(p_i,r_i,q_i)\}_{i=0}^N$ be the data of a Newton diagram as above.
\begin{enumerate}
\item
Suppose $D_4(i,j,k,l)\neq 0$. Let $$P_t (x,y) = |t|^{2(q_i+q_j+q_k+q_l)}x^{2(p_i+p_j+p_k+p_l)-1}y^{2(r_i+r_j+r_k+r_l)-1}$$ and $Q_t(x,y) = \sum_{i=0}^N |t|^{2q_i}x^{2p_i}y^{2r_i}$. Then for each $t\neq 0$, as polynomials in $x,y$, $I + \Delta(P_t) \subset \Delta^0(Q_t^4)$. In particular, the integral over the first quadrant $\int_{\R^2_+}P_t/(Q_t)^4 < +\infty$ for each $t\neq 0$.
\item
Suppose $D_4(i,j,k,l) \neq 0$ and the points $\{(p_\beta,r_\beta,q_\beta)\}, \beta \in \{i,j,k,l\}$ all lie on the face $F$ of the Newton polyhedron. Then if $P = x^{2(p_i+p_j+p_k+p_l)-1}y^{2(r_i+r_j+r_k+r_l)-1}$ and $Q = \sum x^{2p_\alpha}y^{2r_\alpha}$, where the sum is over all indices $\alpha$ such that there exist points $\{(p_\alpha,r_\alpha,q_\alpha)\}$ in the Newton diagram lying on the face $F$, then $I + \Delta(P) \subset \Delta^0(Q^4)$.
\end{enumerate}
\end{lem}

\begin{proof}

The proof is the same in both cases: If $D_4(i,j,k,l)\neq 0$, then the 4 points $(p_i,r_i),(p_j,r_j),(p_k,r_k),(p_l,r_l)$ are not collinear, which ensures that these 4 points, chosen from among the lattice points in $\Delta(Q)$, which by definition sum to a point in $4\Delta(Q) = \Delta(Q^4)$, do not sum to a point on a boundary segment.
\end{proof}

\medskip

We are now ready to prove the main lemma in which we calculate the nontrivial contribution to the slope:
\begin{lem}\label{slopeCalc}
Let $\{i,j,k,l\}$ be a set of indices such that $D_4(i,j,k,l)\neq 0$. Suppose first that the points $\{(p_\beta,r_\beta,q_\beta)\}, \beta \in \{i,j,k,l\}$ all lie on the face $F$ of the Newton polyhedron. Fix $\ep>0$ small and set $$I(t)=\int_0^\ep \int_0^\ep \frac{|t|^{2(q_i+q_j+q_k+q_l)} x^{2(p_i+p_j+p_k+p_l)-1}y^{2(r_i+r_j+r_k+r_l)-1}}{|\sigma S|^8}\log |\sigma S|^2dxdy.$$ Let the equation of the affine plane of which $F$ is a subset be given by the equation $m_F^x x+m_F^y y + z = d_F$. Then	
	\begin{equation}
		I(t)= 2d_F \log|t|\int_0^\infty \int_0^\infty \frac{ x^{2(p_i+p_j+p_k+p_l)-1}y^{2(r_i+r_j+r_k+r_l)-1}}{(\sum_\al x^{2p_\al}y^{2r_\al})^4}dxdy +O(1)
	\end{equation}
where the sum in the denominator of the integrand is over all indices $\al$ where the points $(p_\al,r_\al,q_\al)$ lie on the face $F$ of $\go{N}$. If $D_4(i,j,k,l)\neq 0$ but the points $\{(p_\beta,r_\beta,q_\beta)\}, \beta \in \{i,j,k,l\}$ do not all lie on a single face of the Newton polyhedron, then $I(t) =  O(1)$.
\end{lem}

\begin{proof}
We compute the integral by rescaling. Let $x\rightarrow |t|^{m^x}x$, $y\rightarrow |t|^{m^y}y$, where $m^x,m^y\geq0$. The integral becomes
\begin{align}
	I(t)&=\int_0^\ep \int_0^\ep \frac{|t|^{2(q_i+q_j+q_k+q_l)} x^{2(p_i+p_j+p_k+p_l)-1}y^{2(r_i+r_j+r_k+r_l)-1}}{|\sigma S|^8}\log |\sigma S|^2dxdy \nonumber\\
	&=\int_0^{\ep |t|^{-m^x}} \int_0^{\ep |t|^{-m^y}}  \frac{|t|^{2((q_i+q_j+q_k+q_l)+m^x(p_i+p_j+p_k+p_l)+m^y(r_i+r_j+r_k+r_l))} x^{2(p_i+p_j+p_k+p_l)-1}y^{2(r_i+r_j+r_k+r_l)-1}}{(\sum_{a=0}^N |t|^{2(q_a+m^x p_a+m^y r_a)}x^{2p_a}y^{2r_a})^4}\nonumber\\
	&\log (\sum_{u=0}^N |t|^{2(q_u+m^x p_u+m^y r_u)}x^{2p_u}y^{2r_u})dxdy.
\end{align} 
Setting $\gamma = q_v+m^x p_v +m^y r_v = \min_u \{q_u+m^x p_u+m^y r_u\}$, we may factor $|t|^{2\gamma}$ out of each term in the denominator and the $\log$ factor. The integrand thus acquires an overall factor of $$|t|^Z =|t|^{2((q_i+q_j+q_k+q_l)+m^x(p_i+p_j+p_k+p_l)+m^y(r_i+r_j+r_k+r_l)-4\gamma)}.$$ Now the minimum $\gamma$ is realized at $q_v+m^x p_v +m^y r_v$ if $(p_v,r_v,q_v)$ is a vertex of $\go{N}$ and $(m^x,m^y)\in M_v$ by the convexity of $\go{N}$. We observe that the overall exponent $Z$ may be rewritten as
\begin{equation}
	Z = [(q_i-q_v)+m^x(p_i-p_v)+m^y(r_i-r_v)]+\cdots+[(q_l-q_v)+m^x(p_l-p_v)+m^y(r_l-r_v)] \geq 0.
\end{equation}
If this exponent is strictly greater than 0, then by Corollary \ref{ConvergenceCor} and Lemma \ref{PointSelection}, the integral $I(t)$ is O(1) as $t\rightarrow 0$. Since $q_v+m^x p_v +m^y r_v = \min_u \{q_u+m^x p_u+m^y r_u\}$, each of the four terms in $Z$ is non-negative, and equality is obtained only if each term in brackets is 0. In this case, the point $(m^x,m^y)$ must be contained in $M_i \cap M_j \cap M_k \cap M_l$, and since at least three of $i,j,k,l$ must be distinct, $(m^x,m^y)$ must lie at a corner of $M_v$, or in other words, each of the points $(p_i,r_i,q_i),...,(p_l,r_l,q_l)$ lies on a common face $F$ of $\go{N}$ with normal vector $(m^x,m^y,1)$, and $\gamma = d_F$. This shows that only sets of indices corresponding to four points lying on a single face of the Newton polytope contribute to the asymptotic slope, and moreover, that all higher-order terms in $\omega_0^i\wedge \omega_\phi^{n-i}$ do not contribute to the slope.

\smallskip

Therefore, supposing that $D_4(i,j,k,l)\neq0$ and the four indices corresponding to points on a face F of the Newton diagram, after rescaling as above, we have
\begin{align}
	I(t)= \int_0^{\ep |t|^{-m^x}} \int_0^{\ep |t|^{-m^y}} & \frac{x^{2(p_i+p_j+p_k+p_l)-1}y^{2(r_i+r_j+r_k+r_l)-1}}{(\sum_\al x^{2p_\al}y^{2r_\al} + |t|^\beta P(x,y,|t|))^4}\\
	&\big(\log |t|^{2\gamma}+\log(\sum_\al x^{2p_\al}y^{2r_\al} + |t|^\beta P(x,y,|t|))\big)dxdy.
\end{align}
where the sum is over all the indices $\al$ of points on the face $F$, $\beta>0$, and $P(x,y,|t|)$ is a polynomial. The effect of the rescaling is to pick out the terms that dominate in the denominator.  Now we take $2\gamma\log|t|$ outside the integral, and take the lowest order approximation of the integrand about $|t|=0$ as in (\ref{FirstOrderApprox}) and (\ref{FirstOrderApprox2}):
\begin{align}
I(t) = 2\gamma\log|t| & \int_0^{\ep |t|^{-m^x}} \int_0^{\ep |t|^{-m^y}}  \frac{x^{2(p_i+p_j+p_k+p_l)-1}y^{2(r_i+r_j+r_k+r_l)-1}}{(\sum_\al x^{2p_\al}y^{2r_\al})^4} dxdy\nonumber\\ 
 &+ \int_0^{\ep |t|^{-m^x}} \int_0^{\ep |t|^{-m^y}} O(\frac{|t|^\beta P(x,y)}{Q(x,y)})(1+\log R(x,y,|t|))dxdy
\end{align}
Letting $|t|$ go to 0, the limits of integration tend to $\infty$, and again by Corollary \ref{ConvergenceCor}, the second term goes to 0. 
\end{proof}

Now let us show there is no non-trivial contribution to the slope from the terms in the Aubin-Yau functional involving $\omega_0^2$ and $\omega_0\wedge\omega_\phi$; that is, their only contribution is from the lowest weight. For the integral $\int_X\log|\sigma S|^2\omega_0^2$ this is easy to see, since $\omega_0^2$ is bounded independent of $t$ and $\log|\sigma S|^2\leq\log|S_N|^2+c$ is integrable on $X$, therefore $\int_X\log|\sigma S|^2\omega_0^n\leq C=O(1)$. 

\smallskip

It remains to show $\int_X\log|\sigma S|^2\omega_0\wedge\omega_\phi$ is bounded as $|t|\rightarrow 0$. This can be seen by computing as before:
\begin{align}
	\int_U \log|\sigma S|^2 \omega\wedge\omega_\phi &= \int_U \frac{\log|\sigma S|^2}{4\pi^2|S|^4|\sigma S|^4} 
	\sum_{i,j,k,l} |t|^{2(q_i+q_j)}|x|^{2(p_i+p_j+p_k+p_l-1)}|y|^{2(r_i+r_j+r_k+r_l-1)}\nonumber\\
	&((ijkl)+(klij))\sqrt{-1}dx\wedge d\bar{x}\wedge \sqrt{-1}dy\wedge d\bar{y} +O(1)\\
	&=4\int_0^\infty \int_0^\infty \sum_{i,j,k,l}((ijkl)+(klij))\log |\sigma S|^2 \nonumber\\
	&\frac{|t|^{2(q_i+q_j)} x^{2(p_i+p_j+p_k+p_l)-1}y^{2(r_i+r_j+r_k+r_l)-1}}{|S|^4|\sigma S|^4}dxdy +O(1)
\end{align}

Under the scaling $x\rightarrow |t|^{m^x}x$, $y\rightarrow |t|^{m^y}y$, we may pull out a factor of $$|t|^{2((q_i+q_j)+m^x(p_i+p_j+p_k+p_l)+m^y(r_i+r_j+r_k+r_l) -2(q_\al+m^xp_\al+m^yr_\al))},$$
where $q_\al+m^xp_\al+m^yr_\al=\min_i\{q_i+m^xp_i+m^yr_i\}$. We find that the exponent
\begin{align}
	[(q_i-q_\al)+m^x(p_i-p_\al)&+m^y(r_i-r_\al)]+[(q_j-q_\al)+m^x(p_j-p_\al)+m^y(r_j-r_\al)]\nonumber\\
	+[m^xp_k+m^yr_k]&+[m^xp_l+m^yr_l]\geq0,
\end{align}
with equality only if each term in brackets is 0. But this can only happen when $(p_k,r_k)=(p_l,r_l)=(0,0)$, in which case $(ijkl)=(klij)=0$. It follows that the lowest order integral is a convergent integral multiplied by a positive power of $|t|$, which is $O(1)$ as $|t|\rightarrow 0$.

\smallskip

Thus we have proven Theorem \ref{lowestWeight}, and when combined with Lemma \ref{slopeCalc}, we obtain Theorem \ref{slope}.

\section{Examples}

The formula in (\ref{slope}) is most easily applicable in the case of toric surfaces. A polarized toric surface $(X,L)$ is associated to a polygon $P$ in the first quadrant of $\R^2$ with integral vertices including the point $(0,0)$. The lattice points $(p_i,r_i)$ of $\overline{P}$ are in 1-1 correspondence with a basis of sections $S_i = x^{p_i}y^{r_i}$, of $L$ in the coordinates of an open, dense subset of $X$. We must also be careful with the computation of our normalized volume. The volume of $X$, $V=\int_X \omega_0^2$ is related to the Euclidean volume of the polygon $P$ by
\begin{equation}
	Vol_{Euc}(P) = \int_X \frac{\omega_0^2}{2}=\frac{V}{2}.
\end{equation}

\subsection{Projective space $\PP^2$}

The bundle $\mathcal{O}(1)$ over $\PP^2$ is represented by a triangle with vertices $(0,0),(1,0),(0,1)$. The area of the triangle is 1/2, so $V=\int_X \omega_0^2=1$. We may specify a test-configuration or Bergman geodesic by assigning a non-negative weight over each point.  The lowest weight contribution is twice the average of the non-negative weights. The setup is symmetric with respect to the points $(1,0)$ and $(0,1)$, so there are not many essentially different configurations. Here are the possibilities:
\begin{enumerate}
\item The weight at $(0,0)$ is 0. In this case, the Newton polytope is trivial, consisting of the entire positive orthant, and there is no non-trivial contribution to the slope. The slope is positive and comes entirely from the lowest weight.

\item The weight at $(0,0)$ is greater than zero, which we may take to be 1 by the linear homogeneity of the slope in the weights. At least one of the remaining weights must be zero. We take our Newton diagram to be $\{(0,0,1),(0,1,q),(1,0,0)\}$. There are two possibilities for $q$:
\begin{enumerate}
\item $q>1$: In this case, the Newton polytope has only one non-trivial face, given by the equation $x+z=1$, and only the points $(0,0,1)$ and $(1,0,0)$ lie on it. There is no non-trivial contribution to the slope. The slope is equal to $\mu = 2(1+q)/3$.

\item $0\leq q \leq 1$: Again the Newton polytope consists of just the face $F:x+(1-q)y+z=1=d_F$. Now all three points lie on the face. We have $D_4(1,2,3,3)=1$. The slope is given by 
\begin{equation}
	\mu = \frac{2(1+q)}{3} - \frac{1}{3}16\cdot 1 \cdot 1 \cdot (I_1+I_2+I_3)
\end{equation}
where the three integrals come from the three choices of repeated index. We may compute
\begin{align}
	I_1 &= \int_0^\infty \int_0^\infty \frac{xy}{(1+x^2+y^2)^4}dxdy = \frac{1}{24},\\
	I_2 &= \int_0^\infty \int_0^\infty \frac{x^3y}{(1+x^2+y^2)^4}dxdy = \frac{1}{24},\\
	I_3 &= \int_0^\infty \int_0^\infty \frac{xy^3}{(1+x^2+y^2)^4}dxdy = \frac{1}{24},
\end{align}
so the slope comes to $\mu = 2(1+q)/3 -2/3=q/3$, which is 0 if $q=0$. In particular, $\{(0,0,1),(0,1,0),(1,0,0)\}$ is the configuration with the smallest slope, and it is non-negative.
\end{enumerate}
\end{enumerate}

\subsection{First Hirzebruch surface}

We may represent this toric surface as the convex polygon with vertices $\{(0,0),(2,0),(1,1),(0,1)\}$ with volume 3/2, and a Bergman geodesic is specified by a choice of weights for a Newton diagram $\{P_0:(0,0,q_{00}),P_1:(1,0,q_{10}),P_2:(2,0,q_{20}),P_3:(0,1,q_{01}),P_4:(1,1,q_{11})\}$. Now there are many more possibilities for the shape of the Newton polytope. Note that if, for example, $q_{00}=0$, $q_{10}=0$, or $q_{01}=0$, then the Newton polytope will be of the same types as for $\PP^2$.

\smallskip

Let us give two examples of test configurations. If we seek slopes that are as small as possible, we want the Newton diagrams to consist entirely of points on the boundary of the Newton polytope. In particular, if $q_{20}=0$, we must have $$q_{10} \leq \frac{1}{2} q_{00}.$$ The maximal number of faces each containing at least three points of the Newton polytope is three, and these may occur in two shapes: $F_1 =\{P_0,P_1,P_4\},F_2=\{P_0,P_3,P_4\},F_3=\{P_1,P_2,P_4\}$ or $\tilde{F}_1 =\{P_0,P_1,P_3\},\tilde{F}_2=\{P_1,P_3,P_4\},\tilde{F}_3=\{P_1,P_2,P_4\}$. The first case occurs if 
\begin{equation}\label{case1}
q_{10}+q_{01}-q_{00}-q_{11} >0,
\end{equation}
and the second case if the inequality is reversed.

\smallskip

Let us take as an example the first case. By solving the inequalities (\ref{MvRegion}), we obtain the equations of the faces of the polytope:
\begin{align}
	F_1 &=  \{(q_{00}-q_{10})p+(q_{10}-q_{11})r+q= q_{00}\}\\
	F_2 &= \{(q_{01}-q_{11})p+(q_{00}-q_{11})r+q = q_{00}\}\\
	F_3 &= \{q_{10}p+(q_{10}-q_{11})r+q = 2q_{10}\}
\end{align} 
The contribution from each face requires the evaluation of three integrals of the form $$I_{ijkl} = \int_0^\infty \int_0^\infty  \frac{x^{2(p_i+p_j+p_k+p_l)-1}y^{2(r_i+r_j+r_k+r_l)-1}}{(\sum_{\al\in F} x^{2p_\al}y^{2r_\al})^4}dxdy.$$ For example, on face $F_3$,
\begin{align}
	I_{1241} = \int_0^\infty \int_0^\infty  \frac{x^{9}y}{(x^2+x^2y^2+x^4)^4}dxdy = \frac{1}{24},
\end{align}
and in fact all of the integrals are the same as the integrals appearing in the $\PP^2$ calculation, and are all equal to $1/24$. Also, all the relevant factors $D_4(i,j,k,l)$ are equal to 1. For the total slope we have
\begin{align}
	\mu &= \frac{2(q_{00}+q_{10}+q_{01}+q_{11}+q_{20})}{5} - \frac{1}{3}\frac{1}{2\cdot 3/2}16\left( \frac{q_{00}}{8}+\frac{q_{00}}{8} + \frac{2q_{10}}{8}\right) \\
	&= \frac{-2(q_{00}+q_{10})+18(q_{01}+q_{11})}{45}
\end{align}
Combining the inequality (\ref{case1}) with $q_{10}<q_{00}/2$, we have $q_{01}>q_{00}/2+q_{11}$, so
\begin{align}
	\mu > \frac{-5q_{00}/2 + 18(q_{01}+q_{11})}{45}
	> \frac{13q_{00}/2 + 36q_{11}}{45}.
\end{align}

For the final example, suppose all of the points of the Newton diagram lie on a single face and $q_{20}=0$. Setting $q_{00}=1$, we have that the equation of the face must be $F: 1/2x + cy+z=1$, where $0\leq c\leq 1/2$. The sum $$m=\sum_{\{i,j,k,l\}^*}D_4(i,j,k,l)\int_0^\infty \int_0^\infty \frac{ x^{2(p_i+p_j+p_k+p_l)-1}y^{2(r_i+r_j+r_k+r_l)-1}}{(\sum_\al x^{2p_\al}y^{2r_\al})^4}dxdy$$ in (\ref{formula}) for the non-trivial part of the slope contains 32 terms: 5 choices for sets of 4 distinct indices, and 9 choices for sets of 3 indices with one repeated (the set $\{0,1,2\}$ excluded for being collinear), each of which gives three terms by the choice of the repeated index. The integrals in $m$ can be computed by Mathematica, for example: 
\begin{align}
	I_{0013} &= \int_0^\infty \int_0^\infty \frac{xydxdy}{(1+x^2+y^2+x^2y^2+x^4)^4} = \frac{7(-9+2\sqrt{3}\pi)}{648}\\
	I_{0014} &= \int_0^\infty \int_0^\infty \frac{x^3ydxdy}{(1+x^2+y^2+x^2y^2+x^4)^4} = \frac{6-\sqrt{3}\pi}{108}\\
	I_{0023} &= \int_0^\infty \int_0^\infty \frac{x^3ydxdy}{(1+x^2+y^2+x^2y^2+x^4)^4} = \frac{6-\sqrt{3}\pi}{108}\\
	I_{0024} &= \int_0^\infty \int_0^\infty \frac{x^5ydxdy}{(1+x^2+y^2+x^2y^2+x^4)^4} = \frac{-9+2\sqrt{3}\pi}{648}\\
	I_{0034} &= \int_0^\infty \int_0^\infty \frac{xy^3dxdy}{(1+x^2+y^2+x^2y^2+x^4)^4} = \frac{9-\sqrt{3}\pi}{324}\\
	I_{0113} &= \int_0^\infty \int_0^\infty \frac{x^3ydxdy}{(1+x^2+y^2+x^2y^2+x^4)^4} = \frac{6-\sqrt{3}\pi}{108}\\
	I_{0114} &= \int_0^\infty \int_0^\infty \frac{x^5ydxdy}{(1+x^2+y^2+x^2y^2+x^4)^4} = \frac{-9+2\sqrt{3}\pi}{648}\\
	I_{0123} &= \int_0^\infty \int_0^\infty \frac{x^5ydxdy}{(1+x^2+y^2+x^2y^2+x^4)^4} = \frac{-9+2\sqrt{3}\pi}{648}\\
	I_{0124} &= \int_0^\infty \int_0^\infty \frac{x^7ydxdy}{(1+x^2+y^2+x^2y^2+x^4)^4} = \frac{-9+2\sqrt{3}\pi}{648}\\
	I_{0133} &= \int_0^\infty \int_0^\infty \frac{xy^3dxdy}{(1+x^2+y^2+x^2y^2+x^4)^4} = \frac{9-\sqrt{3}\pi}{324}\\
	I_{0134} &= \int_0^\infty \int_0^\infty \frac{x^3y^3dxdy}{(1+x^2+y^2+x^2y^2+x^4)^4} = \frac{-36+7\sqrt{3}\pi}{648}\\
	I_{0144} &= \int_0^\infty \int_0^\infty \frac{x^5y^3dxdy}{(1+x^2+y^2+x^2y^2+x^4)^4} = \frac{45-8\sqrt{3}\pi}{648}\\
	I_{0223} &= \int_0^\infty \int_0^\infty \frac{x^7ydxdy}{(1+x^2+y^2+x^2y^2+x^4)^4} = \frac{-9+2\sqrt{3}\pi}{648}\\
	I_{0224} &= \int_0^\infty \int_0^\infty \frac{x^9ydxdy}{(1+x^2+y^2+x^2y^2+x^4)^4} = \frac{6-\sqrt{3}\pi}{108}\\
	I_{0233} &= \int_0^\infty \int_0^\infty \frac{x^3y^3dxdy}{(1+x^2+y^2+x^2y^2+x^4)^4} = \frac{-36+7\sqrt{3}\pi}{648}\\
	I_{0234} &= \int_0^\infty \int_0^\infty \frac{x^5y^3dxdy}{(1+x^2+y^2+x^2y^2+x^4)^4} = \frac{45-8\sqrt{3}\pi}{648}\\
	I_{0244} &= \int_0^\infty \int_0^\infty \frac{x^7y^3dxdy}{(1+x^2+y^2+x^2y^2+x^4)^4} = \frac{-36+7\sqrt{3}\pi}{648}\\
	I_{0334} &= \int_0^\infty \int_0^\infty \frac{xy^5dxdy}{(1+x^2+y^2+x^2y^2+x^4)^4} = \frac{27-4\sqrt{3}\pi}{216}\\
	I_{0344} &= \int_0^\infty \int_0^\infty \frac{x^3y^5dxdy}{(1+x^2+y^2+x^2y^2+x^4)^4} = \frac{-9+2\sqrt{3}\pi}{216}\\
	I_{1123} &= \int_0^\infty \int_0^\infty \frac{x^7ydxdy}{(1+x^2+y^2+x^2y^2+x^4)^4} = \frac{-9+2\sqrt{3}\pi}{648}\\
	I_{1124} &= \int_0^\infty \int_0^\infty \frac{x^9ydxdy}{(1+x^2+y^2+x^2y^2+x^4)^4} = \frac{6-\sqrt{3}\pi}{108}\\
	I_{1134} &= \int_0^\infty \int_0^\infty \frac{x^5y^3dxdy}{(1+x^2+y^2+x^2y^2+x^4)^4} = \frac{45-8\sqrt{3}\pi}{648}\\
	I_{1223} &= \int_0^\infty \int_0^\infty \frac{x^9ydxdy}{(1+x^2+y^2+x^2y^2+x^4)^4} = \frac{6-\sqrt{3}\pi}{108}\\
	I_{1224} &= \int_0^\infty \int_0^\infty \frac{x^{11}ydxdy}{(1+x^2+y^2+x^2y^2+x^4)^4} = \frac{7(-9+2\sqrt{3}\pi)}{648}\\
	I_{1233} &= \int_0^\infty \int_0^\infty \frac{x^5y^3dxdy}{(1+x^2+y^2+x^2y^2+x^4)^4} = \frac{45-8\sqrt{3}\pi}{648}\\
	I_{1234} &= \int_0^\infty \int_0^\infty \frac{x^7y^3dxdy}{(1+x^2+y^2+x^2y^2+x^4)^4} = \frac{-36+7\sqrt{3}\pi}{648}\\
	I_{1244} &= \int_0^\infty \int_0^\infty \frac{x^9y^3dxdy}{(1+x^2+y^2+x^2y^2+x^4)^4} = \frac{9-\sqrt{3}\pi}{324}\\
	I_{1334} &= \int_0^\infty \int_0^\infty \frac{x^5y^3dxdy}{(1+x^2+y^2+x^2y^2+x^4)^4} = \frac{45-8\sqrt{3}\pi}{648}\\
	I_{1344} &= \int_0^\infty \int_0^\infty \frac{x^5y^5dxdy}{(1+x^2+y^2+x^2y^2+x^4)^4} = \frac{-9+2\sqrt{3}\pi}{216}\\
	I_{2234} &= \int_0^\infty \int_0^\infty \frac{x^9y^3dxdy}{(1+x^2+y^2+x^2y^2+x^4)^4} = \frac{9-\sqrt{3}\pi}{324}\\
	I_{2334} &= \int_0^\infty \int_0^\infty \frac{x^5y^5dxdy}{(1+x^2+y^2+x^2y^2+x^4)^4} = \frac{-9+2\sqrt{3}\pi}{216}\\
	I_{2344} &= \int_0^\infty \int_0^\infty \frac{x^7y^5dxdy}{(1+x^2+y^2+x^2y^2+x^4)^4} = \frac{27-4\sqrt{3}\pi}{216}.
\end{align}
More detail on the evaluation of such period integrals will appear in forthcoming work. (Note, for example, the symmetry in the above values where if $$I(P,R) = \int_0^\infty \int_0^\infty \frac{x^Py^Rdxdy}{(1+x^2+y^2+x^2y^2+x^4)^4},$$ then $I(P,R) = I(13/2-R/2-P,R)$.) Incredibly, the overall sum $m$ of these integrals weighted by the numbers $D_4(i,j,k,l)$ is rational: $m=3/8$. The total asymptotic slope is 
\begin{align}
	\mu &= \frac{2(q_{00}+q_{10}+q_{01}+q_{11}+q_{20})}{5}-\frac{1}{3}\frac{1}{2\cdot3/2}16q_{00}\frac{3}{8}\\
		&= \frac{2(q_{10}+q_{01}+q_{11}+q_{20})}{5}-\frac{2q_{00}}{3}.
\end{align} 
Setting $q_{00}=1$, $\mu$ attains its smallest value when $q_{10}=q_{01}=1/2$, $q_{20}=q_{11}=0$, in which case $\mu=-4/15$, and the configuration is unstable. 

\section{Directions for further work}

We hope to be able to extend this analysis both to higher dimension and to nontrivial non-toric examples.  With regard to the questions posed in the introduction, we might proceed by analyzing the asymptotic slope formula to identify the configuration of weights that gives the minimum slope, and perhaps characterize the data that admits a negative minimum. Given the examples above and the inclusion of many more terms to the nontrivial part of the slope, it is tempting to conjecture that the minimum slope is achieved by weight configurations that are \emph{balanced} in the sense of having all points lying on a single face of the Newton polytope, and minimal among such configurations. Of course, in dimensions greater than one, determining the minimum slope is made more complicated by the appearance of the period integrals in the formula. If we can identify the directions of negative asymptotic slope, it will hopefully shed light on the singularities of solutions of the Fano KE equation.


\begin{thebibliography}{HHHH}

\bibitem{Berm} R. Berman, $K$-Polystability of Q-Fano varieties admitting K\"{a}hler-Einstein metrics, arXiv:1205.6214v2.


\bibitem{Bern} B. Berndtsson, Convexity on the space of \kahler metrics.  Ann. Fac. Sci. Toulouse Math. (6) 22 (2013), no. 4, 713–746.

\bibitem{CDS1} X. Chen, S.K. Donaldson, S. Sun, K\"{a}hler-Einstein metrics on Fano manifolds. I: Approximation of metrics with cone singularities, J. Amer. Math. Soc. {\bf 28} (2015), no. 1, 183-197.

\bibitem{CDS2} X. Chen, S.K. Donaldson, S. Sun, K\"{a}hler-Einstein metrics on Fano manifolds. II: Limits with cone angle less than $2\pi$. J. Amer. Math. Soc. {\bf 28} (2015), no. 1, 199-234.

\bibitem{CDS3} X. Chen, S.K. Donaldson, S. Sun, K\"{a}hler-Einstein metrics on Fano manifolds. III: Limits as cone angle approaches $2\pi$ and completion of the main proof. J. Amer. Math. Soc. {\bf 28} (2015), no. 1, 235-278. 


\bibitem{D1} S.K. Donaldson, Anti self-dual Yang-Mills connections over complex algebraic surfaces and stable vector bundles, Proc. London Math. Soc. \textbf{50} (1985), 1-26.

\bibitem{D} S.K. Donaldson, Scalar curvature and projective embeddings I, J. Differential Geom. {\bf 59} (2001) 479-522.

\bibitem{D2} S.K. Donaldson, Scalar curvature and stability of toric varieties, J. Differential Geom. {\bf 59} (2002) 289-349.

\bibitem{DS} S.K. Donaldson and S. Sun, Gromov-Hausdorff limits of \kahler manifolds and algebraic geometry, arXiv:1206.2609

\bibitem{ET} T.O. Ermolaeva and A.K. Tsikh, Integration of rational functions over $\R^n$ by means of toric compactifications and multidimensional residues, Sbornik: Mathematics. {\bf 187}:9 (1996) 1301-1318.

\bibitem{KZ} S. Klevtsov and S. Zelditch, Stability and integration over Bergman metrics, J. High Energy Phys. no. 7 (2014) 100

\bibitem{P} S.T. Paul, Geometric analysis of Chow-Mumford stability, Adv. Math. 182 (2004), no. 2, 333-356.

\bibitem{PhStein} D.H. Phong and E.M. Stein, The Newton polyhedron and oscillatory integral operators, Acta Math. {\bf 179} (1997) 105-152.

\bibitem{PSAlg} D.H. Phong and J. Sturm, Algebraic estimates, stability of local zeta functions, and uniform estimates for distribution functions, Ann. of Math. {\bf 152} (2000) 277-329.

\bibitem{PSStab} D.H. Phong and J. Sturm, Stability, energy functionals, and K\"ahler-Einstein metrics, Commun. Analysis and Geometry, {\bf 11} (2003) 565-597.

\bibitem{PSMab} D.H. Phong and J. Sturm, On asymptotics for the Mabuchi energy functional, Abstract and applied analysis, 271-286, World Sci. Publ., River Edge, NJ, 2004.

\bibitem{PSLSC} D.H. Phong and J. Sturm, Lectures on stability and constant scalar curvature, Handbook of Geometric Analysis, No. 3, 357-436, Adv. Lect. Math. (ALM), 14, Int. Press, Somerville, MA, 2010.




\bibitem{T2} G. Tian, $K$-stability and K\"{a}hler-Einstein metrics, arXiv:1211.4669

\bibitem{UY} K. Uhlenbeck, S.-T. Yau, On the existence of Hermitian-Yang-Mills connections in stable vector bundles, Comm. Pure Appl. Math. \textbf{39-S} (1986), 257-293.

\bibitem{Y1} S.-T. Yau, On the Ricci curvature of a compact \kahler manifold and the complex Monge-Amp\`{e}re equation, I, Comm. Pure Appl. Math. \textbf{31} (1978), no. 3, 339-411.

\bibitem{Y} S.-T. Yau, Open problems in geometry, Proc. Symposia Pure Math. {\bf 54} (1993) 1-28.

\bibitem{Z} S.W. Zhang, Heights and reductions of semistable varieties, Compositio Math. {\bf 104} (1996) 77-105.

\end{thebibliography}
\end{document}